\newtheorem{theo}{Theorem}[section]
\newtheorem{lem}[theo]{Lemma}
\newtheorem{cor}[theo]{Corollary}
\newtheorem{prop}[theo]{Proposition}
\theoremstyle{definition}
\newtheorem{defi}[theo]{Definition}
\newtheorem{exa}[theo]{Example}
\newcommand{\e}[1]{{\rm e}^{#1}}
\newcommand{\N}{\mathbb N}
\newcommand{\R}{\mathbb R}
\newcommand{\C}{\mathbb C}
\begin{document}

\begin{center}
\section*{Uniqueness of local, analytic solutions to singular ODEs}

\large{Thomas de Jong\\}
\small{Media Analytics and Computing laboratory, School of Information Science and Engineering, \\
Xiamen University,
Xiamen 361005, China.\\
{\tt t.g.de.jong.math@gmail.com}}
\\[2mm]

\large{Patrick van Meurs\\}
\small{Faculty of Mathematics and Physics, 
Institute of Science and Engineering, \\
Kanazawa University,
Kanazawa, Japan.\\
{\tt pjpvmeurs@staff.kanazawa-u.ac.jp}}
\\[2mm]

\end{center}

\begin{small} \noindent \textbf{Abstract.} We study local, analytic solutions for a class of initial value problems for singular ODEs. We prove existence and uniqueness of such solutions under a certain non-resonance condition. Our proof translates the singular initial value problem to an equilibrium problem of a regular ODE. Then, we apply classical invariant manifold theory. We demonstrate that the class of ODEs under consideration captures models which describe the shape of axially symmetric surfaces which are closed on one side. Our main result guarantees smoothness at the tip of the surface. 
\vspace{3mm}

\noindent {\bf Keywords:} Singularities, ordinary differential equations, asymptotics, analyticity, invariant manifolds. \\ 
\end{small}

\section{Introduction \label{sec:intro}}

The problem in this paper is motivated by a specific application in which the shape of an axial symmetric surfaces with a smooth tip is sought. We postpone the details of this model to Section \ref{sec:application}, and focus here on a more general setting.

Suppose we are modeling an axially symmetric surface with a smooth tip in cylindrical variables as the solution of an ordinary differential equation. We parametrize the axial co-ordinate $z$ with respect to the axial distance co-ordinate $r$; see Figure \ref{fig:axial}. Hence, $r$ is the independent variable in the ODE and $z$ is the dependent variable. We are interested in  deriving conditions under which solutions $z(r)$ are unique. The requirement that the tip of the surface is smooth translates to the requirement that $z$  is even and smooth in a neighborhood around $0$. For convenience, we further assume that $z$ is locally analytic. Then, the requirements on $z$ can be reformulated as the requirement that there exist $\varepsilon > 0$ and $g \in C^{\omega}((-\varepsilon,\varepsilon), \mathbb{R})$ with $g(0)=0$ such that
\begin{align}
z(r) = g(r^2)
\quad \text{for all } |r|<\varepsilon. \label{eq:intro_an}
\end{align}

We assume that the governing equations are of the form
\begin{align}
\frac{dz}{dr} = \frac{V(z,r^2)}{r}, \qquad z(0) = 0, \label{eq:intro}
\end{align} 
where $ 0<r < r_0$ and $V \in C^{\omega}(\mathbb{R}^2, \mathbb{R})$ with $V(0) = 0$. The singularity $1/r$ arises from the expression of the gradient in cylindrical coordinates. The argument $r^2$ in $V$ forces evenness of the solution. In applications the argument $r^2$ arises naturally from surface force terms or from a cumulative flux. We are interested in finding sufficient conditions on $V$ for which solutions of \eqref{eq:intro} satisfying \eqref{eq:intro_an} are unique. 

\begin{figure}[h]
\centering
\includegraphics[width=5cm]{./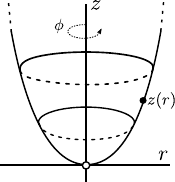}
\caption{ Axial symmetric surface with smooth tip: The surface is described in cylindrical variables by $(\phi,r,z(r))$. The axial co-ordinate, $z$, is parametrised with respect to the axial co-ordinate $r$.  \label{fig:axial}}
\end{figure}

\begin{adjustwidth}{2.5em}{2.5em}
\begin{exa} \label{ex} 
A simple but insightful example of \eqref{eq:intro} is when $V$ is linear. Then, \eqref{eq:intro} reads as
\begin{align}
z' = \lambda \frac{z}{r} +  b r \label{eq:example}
\end{align}
with $\lambda,b \in \R$. The general solution is given by 
\begin{align*}
z(r)  = C_1 r^{\lambda} + \tilde{z}(r),
\end{align*}
where $C_1$ a free constant and 
\begin{align*}
\tilde{z}(r) :=
\begin{cases}
\frac{b}{2-\lambda} r^2  & {\rm if} \;   \lambda  \neq 2, \\
 b r^2 \log(r)  & {\rm if} \;   \lambda  = 2  .
\end{cases}
\end{align*}
If $\lambda \neq 2$, then the solution $z$ with $C_1 = 0$ satisfies \eqref{eq:intro_an}. If $\lambda \notin 2 \mathbb{N}_+$, then this is the only solution which satisfies \eqref{eq:intro_an}. \\
\end{exa}
\end{adjustwidth}

The condition $\lambda \notin 2 \mathbb{N}_+$ in Example \ref{ex} is a type of non-resonance condition. It suggests that \eqref{eq:intro} will not have unique solutions for any analytic $V$, and that in addition a requirement such as $\partial_z V(0) \notin 2 \mathbb{N}_+$ is needed. This additional requirement turns out to be exactly the sufficient condition on $V$ in the main result in this paper.

The biological application in Section \ref{sec:application} requires the following generalization of \eqref{eq:intro_an}--\eqref{eq:intro} to higher dimensions. The solution concept for the unknown $x \in \R^n $ is that there exists some $g \in C^{\omega}((-\varepsilon,\varepsilon), \R^n)$ with $g(0)=0$ such that
\begin{align}
x(r) = g(r^2), \; {\rm for\; all \;} |r|<\varepsilon. \label{eq:intro_an_n}
\end{align}
The ODE for $x$ is
\begin{align}
\frac{dx}{dr} = \frac{V(x,r^2)}{r}, \qquad x(0) =0, \label{eq:intro_n}
\end{align} 
where $V \in C^{\omega}(\mathbb{R}^{n+1}, \mathbb{R}^n)$ with $V(0) = 0$. To reveal the connection with Example \ref{ex}, we expand
\begin{align}
V(x,r^2)= A x + b r^2 + f(x,r^2), \label{eq:V}
\end{align}
where $A \in \mathbb{R}^{n \times n}$, $b \in \R^n$ and $f(y) = O(|y|^2)$ with $y = (x,r^2)$. Our main result, Corollary \ref{cor:main}, is that a sufficient condition on $V$ for the existence and uniqueness of solutions to \eqref{eq:intro_n} satisfying \eqref{eq:intro_an_n} is that $\lambda_i \notin 2 \mathbb{N}_+$ for all $1 \leq i \leq n$, where $\lambda_i$ are the eigenvalues of $A$.

To prove Corollary \ref{cor:main}, we transform \eqref{eq:intro_n} into an autonomous system with no singularity and where the problem of uniqueness of analytic solutions turns into an equilibrium study. To remove the singularity, we introduce the independent variable $t$ given by $r=\e{t}$, and obtain from \eqref{eq:intro_n} that $\hat x(t) := x(e^t)$ satisfies
\begin{align}
 \dot{\hat x} := \frac{d\hat x}{dt} = V(\hat x, e^{2t}). \label{eq:nonaut}
\end{align}
Note that the initial condition in \eqref{eq:intro_n} at $r = 0$ is transformed to the equilibrium point $0$ at $t = -\infty$. To make this system autonomous, we introduce the dependent variable $\rho = e^{2t}$ and consider
\begin{equation} \label{eq:intro_DS}
  \left\{ \begin{aligned}
    \dot{\hat x} &= V(\hat x, \rho) \\
    \dot \rho &= 2\rho.
  \end{aligned} \right.
\end{equation}
Along this transformation, \eqref{eq:intro_an_n} implies that
\begin{align} \label{eq:intro_DS_an}
\hat{x}(t) = {g}(\rho(t)) \quad \text{for all } t \text{ with } 0 < \rho(t)< \hat{\varepsilon}.
\end{align}
Hence, Corollary \ref{cor:main} can be formulated in terms of local properties of the equilibrium of \eqref{eq:intro_DS}. Theorem \ref{theo:main} provides the precise statement. We consider Theorem \ref{theo:main} as our main mathematical result, and Corollary \ref{cor:main} as the main statement regarding its application. We prove Theorem \ref{theo:main} by using classical  invariant manifold theory \cite{abbondandolo2006global, carr2012applications, guckenheimer2013nonlinear, irwin1970stable, irwin2001smooth, kelley1966stable, shub1978stabilite}.

To demonstrate the applicability and use of Corollary \ref{cor:main}, we apply it to the Ballistic Ageing Thin viscous Sheet (BATS) model \cite{jong2020modelling}. The BATS model describes tip growth for single fungal  cells in terms of a system of ODEs for an axial symmetric surface with a smooth tip. We show that Corollary \ref{cor:main} provides sufficient conditions for the parameters in the BATS model under which unique solutions with a smooth tip exist. More specifically, our result implies that if the expansion is of sufficiently high order then it approximates the smooth solution at the tip. This gives a theoretical motivation for the numerical approach in \cite{jong2019numerics} in which approximations to solutions to the BATS model are constructed from asymptotic expansions. 

The paper is organized as follows. In Section \ref{sec:main} we formulate \eqref{eq:intro_DS}--\eqref{eq:intro_DS_an} in a general dynamical systems framework and present Theorem \ref{theo:main}. We prove it in Section \ref{sec:proof}. In Section \ref{sec:application} we formulate and prove Corollary \ref{cor:main} and apply it to the BATS model. In Section \ref{sec:conc} we give concluding remarks and suggest future research. 

\section{Main mathematical result \label{sec:main}}

In this section we present Theorem \ref{theo:main}, which is our main mathematical result. 

We define the phase space
\begin{align*}
M: = \{ y = (x, \rho) \in \mathbb{R}^n \times \mathbb{R} \}.
\end{align*}
On $M$ we consider the generalization of \eqref{eq:intro_DS} given by
\begin{gather}
\left\{ \begin{aligned}
\dot x &= A x  + b \rho +   f(x,\rho) \\
\dot \rho &= \sigma \rho ,
\end{aligned} \right. \label{eq:gov_n}
\end{gather}
where  $b \in \mathbb{R}^{n}$, $\sigma > 0$, $A \in \R^{n \times n}$ and $f \in C^{\omega}(\mathbb{R}^{n+1}, \mathbb{R}^n)$ with $f(y) = O(|y|^2)$ as $y \to 0$.  The vector field corresponding to \eqref{eq:gov_n} has an equilibrium at $0$ with linearization
\begin{align}
\begin{bmatrix}
A & b \\
0_{1 \times n} & 2 
\end{bmatrix}. \label{eq:lin}
\end{align}
%

\begin{defi}[$\rho$-analytic] A solution $(x,\rho)$ of \eqref{eq:gov_n} is called $\rho$-analytic if there exist $R, \varepsilon >0$ and $g \in C^{\omega}((-\varepsilon, \varepsilon), \mathbb{R}^n)$ with $g(0)=0$ such that $x= g(\rho)$ on $(-\infty, -R)$.
\label{defi:rho-anal}
\end{defi}

We make three preliminary observations. First, the equilibrium $(x, \rho) = 0$ is a $\rho$-analytic solution. Second, since $\rho$ can be solved directly from \eqref{eq:gov_n} (i.e.\ $\rho(t) = c_0 \e{\sigma t}$ for some $c_0 \in \R$) the $x$-component of a $\rho$-analytic solution can be expressed as an analytic function of $\e{\sigma t}$. Third, the freedom in the choice of $c_0$ corresponds to a translation in time. Hence, $\rho$-analytic solutions are invariant in translation in time, and it suffices to consider $c_0 \in \{-1,0,1\}$. 

We want to express Definition \ref{defi:rho-anal} in the language of invariant manifolds. We introduce $W^{\rho}(0) \subset M$ as the set of all initial conditions for which the corresponding solution is $\rho$-analytic. Denote by $W^{u}(0)$ the unstable manifold corresponding to \eqref{eq:gov_n}.

\begin{prop} \label{propo} $W^{\rho}(0) \subset W^{u}(0)$.
\end{prop}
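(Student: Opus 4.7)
The plan is to exploit the explicit solvability of the $\rho$-equation to show that every $\rho$-analytic orbit decays exponentially in backward time, which is the defining property of points on the unstable manifold of the equilibrium $0$.

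First I would fix $y_0 \in W^\rho(0)$ and let $(x(t),\rho(t))$ denote its orbit under \eqref{eq:gov_n}. Integrating $\dot\rho = \sigma\rho$ gives $\rho(t) = c_0 \e{\sigma t}$ for some $c_0 \in \R$, and since $\sigma > 0$ we have $\rho(t) \to 0$ exponentially as $t \to -\infty$. The case $c_0 = 0$ is degenerate: $\rho \equiv 0$ combined with $x(t) = g(\rho(t)) = g(0) = 0$ for $t < -R$ and real-analytic uniqueness for \eqref{eq:gov_n} forces $x \equiv 0$, so $y_0 = 0 \in W^u(0)$ trivially. Hence assume $c_0 \neq 0$.

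Next I would use the $\rho$-analytic representation. By Definition \ref{defi:rho-anal}, there exist $R,\varepsilon>0$ and $g \in C^\omega((-\varepsilon,\varepsilon),\R^n)$ with $g(0)=0$ such that $x(t) = g(\rho(t))$ on $(-\infty,-R)$. Analyticity of $g$ together with $g(0)=0$ gives $|g(\rho)| \le C|\rho|$ for $|\rho|$ sufficiently small, and since $|\rho(t)| < \varepsilon$ for $t$ sufficiently negative, one obtains
\begin{equation*}
|x(t)| \le C|\rho(t)| = O(\e{\sigma t}) \quad \text{as } t \to -\infty.
\end{equation*}
Thus the full orbit satisfies $|(x(t),\rho(t))| = O(\e{\sigma t})$ in backward time.

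Finally I would invoke the standard (local) unstable manifold theorem from the references cited in the introduction: there is a neighborhood $U$ of $0$ such that a point lies in $W^u_{\mathrm{loc}}(0)$ precisely when its backward orbit remains in $U$ and converges to $0$ at exponential rate. The estimate above puts $(x(t),\rho(t)) \in U$ for all $t \le -T$ with $T$ large enough, so $(x(-T),\rho(-T)) \in W^u_{\mathrm{loc}}(0) \subset W^u(0)$, and flow-invariance of $W^u(0)$ yields $y_0 = \phi_T(x(-T),\rho(-T)) \in W^u(0)$.

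The main subtlety is that the equilibrium need not be hyperbolic, since the spectrum of $A$ is unrestricted and can contain eigenvalues with zero or negative real parts, giving rise to nontrivial center and stable directions. What saves the argument is that the distinguishing feature of the unstable manifold (as opposed to center-stable or center manifolds) is precisely exponential backward convergence, and this is exactly what $\sigma > 0$ together with the analyticity and vanishing of $g$ at $0$ deliver.
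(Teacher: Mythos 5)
Your proof is correct, and it takes a genuinely more concrete route than the paper's. The paper argues by exclusion: it appeals to the center manifold theorem to assert that $W^\rho(0)$ must sit inside one of $W^u(0)$, $W^c(0)$, $W^s(0)$, then rules out $W^s(0)$ via the backward-time limit in Definition \ref{defi:rho-anal} and rules out $W^c(0)$ by examining the tangent vector at $0$ against the block structure of \eqref{eq:lin} (the $\rho$-component of any $E^c$ vector vanishes). You instead verify the dynamical characterization of $W^u(0)$ head-on: the explicit solution $\rho(t)=c_0\e{\sigma t}$ together with $g\in C^\omega$, $g(0)=0$ gives $|(x(t),\rho(t))|=O(\e{\sigma t})$ as $t\to -\infty$, which is exactly backward exponential convergence, and flow-invariance of $W^u(0)$ lifts the local statement to $y_0$. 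What your argument buys is transparency — it makes explicit \emph{why} the inclusion holds (the $\dot\rho=\sigma\rho$ equation forces exponential decay, and analyticity of $g$ transfers that decay to the $x$-component) rather than deferring to a trichotomy, and it cleanly handles the degenerate $c_0=0$ branch as the equilibrium itself. What the paper's shorter argument buys is avoiding the need to quote a quantitative characterization of $W^u_{\mathrm{loc}}(0)$.

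One point worth sharpening in your write-up: the characterization you invoke (``$y\in W^u_{\mathrm{loc}}(0)$ iff the backward orbit stays in $U$ and converges to $0$ at exponential rate'') is only correct if the rate is required to be some $\alpha$ with $0<\alpha<\alpha_u$, where $\alpha_u$ is the smallest real part among the unstable eigenvalues of \eqref{eq:lin}; otherwise the statement is ambiguous in the non-hyperbolic setting you correctly flag. Your estimate is at rate $\sigma$, and since $\sigma$ is itself one of the unstable eigenvalues one has $\alpha_u\le\sigma$, so $O(\e{\sigma t})$ as $t\to-\infty$ is at least as strong as $O(\e{\alpha t})$ for every $\alpha<\alpha_u$ — the argument goes through, but it is worth saying this explicitly so that the reader sees the rate you produce is on the right side of the spectral gap.
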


\begin{proof} By the center manifold theorem \cite{guckenheimer2013nonlinear} $W^{\rho}(0)$ is contained in either $W^{u}(0)$, $W^{c}(0)$ or $W^{s}(0)$, which are the unstable, center and stable manifolds, respectively. By Definition \ref{defi:rho-anal} it follows that $W^{\rho}(0) \not \subset W^{s}(0)$. 

It is left to show that $W^{\rho}(0) \not \subset W^{c}(0)$. Denote by $E^{u}$ and $E^{c}$ the linear eigenspaces corresponding to the stable and center subspace, respectively. Let $(v, r) \in M$ be a nonzero tangent vector of $W^{\rho}(0)$ at $0$. Definition \ref{defi:rho-anal} implies $r \neq 0$.  It follows from \eqref{eq:lin} that $(\tilde{v}, \tilde r) \in E^c$ implies $\tilde r = 0$. Hence, $(v, r) \notin E^c$ and thus $W^{\rho}(0) \not \subset W^{c}(0)$. 
\end{proof}


\begin{theo} Let $\lambda_1, \ldots, \lambda_n$ be the eigenvalues of $A$. If  $\rm{Re} (\lambda_i) \notin \sigma \N_+$ for all $1 \leq i \leq n$, then $W^{\rho}(0)$ is a one-dimensional smooth manifold and there exist $\varepsilon > 0$ and $g \in C^{\omega}( (-\varepsilon, \varepsilon) , \mathbb{R}^n)$ such that
\begin{align}
\{ (x,\rho) \in W^{\rho}(0) \; : \;   \rho \in (-\varepsilon, \varepsilon) \} = \{ (g(\rho),\rho) \; : \;   \rho \in (-\varepsilon, \varepsilon) \}. \label{eq:mani_anal}
\end{align}

 \label{theo:main}
\end{theo}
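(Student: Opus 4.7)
The plan is to construct a one-dimensional analytic invariant manifold $N$ of \eqref{eq:gov_n} tangent at the origin to the eigenvector of \eqref{eq:lin} corresponding to $\sigma$, and then to identify $N$ locally with $W^{\rho}(0)$. Proposition \ref{propo} already places $W^{\rho}(0)$ inside $W^{u}(0)$, so the remaining task is to pick out the correct ``thin'' invariant sub-manifold.

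The first step exploits the non-resonance assumption with $k=1$: since $\mathrm{Re}(\lambda_i)\neq\sigma$ for every $i$, the matrix $\sigma I-A$ is invertible, and \eqref{eq:lin} has $\sigma$ as a simple eigenvalue with eigenvector $(v,1)^{\top}$ where $v:=(\sigma I-A)^{-1}b$. I look for $N$ as a graph $\{(g(\rho),\rho):|\rho|<\varepsilon\}$ with $g$ analytic and $g(0)=0$. Invariance under \eqref{eq:gov_n} is equivalent to the functional equation
\begin{align*}
\sigma\rho\,g'(\rho)=A\,g(\rho)+b\rho+f\bigl(g(\rho),\rho\bigr),\qquad g(0)=0.
\end{align*}
Writing $g(\rho)=\sum_{k\geq 1}a_k\rho^k$ and collecting powers produces the recursion $(k\sigma I-A)a_k=P_k(a_1,\dots,a_{k-1})$, where $P_k$ is a polynomial in the earlier coefficients and the Taylor data of $f$. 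Because $k\sigma$ is real and $\mathrm{Re}(\lambda_i)\neq k\sigma$ for every $k\in\N_+$, the matrix $k\sigma I-A$ is invertible for every $k$, so all $a_k$ are uniquely and explicitly determined.

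The second step promotes this formal series to a genuinely analytic $g$ and hence produces $N$. In keeping with the invariant-manifold references cited in the introduction, the natural route is to apply a classical invariant-manifold theorem to the spectral splitting $\R\cdot(v,1)^{\top}\oplus(\R^n\times\{0\})$ of the linearization: the spectral gap together with the non-resonance hypothesis delivers a one-dimensional analytic invariant manifold through the origin tangent to $(v,1)^{\top}$. A more hands-on alternative is a majorant estimate that uses $\|(k\sigma I-A)^{-1}\|=O(1/k)$ to bound the $a_k$ by Cauchy majorants of $f$, giving a positive radius of convergence. Either way one obtains $g\in C^{\omega}((-\varepsilon,\varepsilon),\R^n)$ and the manifold $N$.

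The final step identifies $N$ with $W^{\rho}(0)$ locally. The inclusion $N\subset W^{\rho}(0)$ is immediate because trajectories on $N$ have the form $x(t)=g(c_0\e{\sigma t})$. Conversely, if $(x_\ast,\rho_\ast)$ is a $\rho$-analytic solution with $x_\ast=h(\rho_\ast)$, then $h$ is analytic near $0$, satisfies the same functional equation as $g$ on one side of $\rho=0$, and by analyticity on both sides; unique solvability of the formal recursion then forces $h\equiv g$ near $0$. I expect the main obstacle to be the second step: turning the formally determined Taylor coefficients into a convergent analytic expansion and, equivalently, isolating the correct one-dimensional analytic invariant manifold tangent to a single eigenvector. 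This is exactly where the non-resonance condition does its essential work.
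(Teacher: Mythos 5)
Your route differs from the paper's in a way worth spelling out. You observe (correctly) that the invariance equation $\sigma\rho\,g'(\rho)=A\,g(\rho)+b\rho+f(g(\rho),\rho)$ has a unique formal power series solution, with $k$-th coefficient obtained by inverting $k\sigma I-A$ --- and that this is exactly where $\mathrm{Re}(\lambda_i)\notin\sigma\N_+$ enters. The paper exploits the very same recursion, but packages it differently: Lemma \ref{lem:trans} substitutes $x=\rho(\tilde x+\tilde c)$ with $\tilde c=-(A-\sigma I)^{-1}b$ (your $v=a_1$), which strips off the first Taylor coefficient and simultaneously shifts the matrix to $A-\sigma I$. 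Iterating $K=\lfloor\Lambda/\sigma\rfloor+1$ times pushes all eigenvalues of the shifted matrix into the open left half-plane, after which the unstable subspace is exactly one-dimensional with eigenvalue $\sigma$, and the \emph{analytic} unstable manifold theorem hands over a genuinely analytic graph for free (Lemma \ref{lem:negeig}). So the paper never has to prove convergence of the formal series: analyticity is outsourced entirely to the unstable manifold theorem after a finite reduction.

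The gap in your argument is in the step you yourself flag as the main obstacle, and your preferred way of closing it does not work. The splitting $\R\cdot(v,1)^\top\oplus(\R^n\times\{0\})$ is indeed an invariant spectral decomposition of \eqref{eq:lin}, but it need not carry a spectral gap: the non-resonance hypothesis allows some $\mathrm{Re}(\lambda_i)$ to lie above $\sigma$ and others below, in which case no classical pseudo-stable/pseudo-unstable manifold theorem (Kelley, Irwin, Carr, Shub, as cited in the paper) applies to this splitting. Those results require the moduli of the eigenvalues on the two sides of the splitting to be separated, which is precisely what is \emph{not} assumed here. The genuine theorem you would need is a non-resonant invariant manifold result of Cabr\'e--Fontich--de la Llave type, which is not among the ``classical'' references you lean on and which you would have to verify delivers analyticity. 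Your fallback --- a Cauchy majorant argument exploiting $\|(k\sigma I-A)^{-1}\|=O(1/k)$ --- is in principle a valid alternative (there are no small divisors, the resolvent even helps), but as written it is only a sketch; one must carry out the induction showing $|a_k|\le C R^k$ from the analyticity of $f$ and the boundedness of the resolvents. Steps 1 and 3 of your argument are sound, and Step 3 in particular is essentially the uniqueness argument the paper performs inside Lemma \ref{lem:trans}. If you replace the appeal to a classical invariant-manifold theorem by either a worked-out majorant estimate or by the paper's shift-to-the-left-half-plane reduction, the proof is complete.
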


Equation \eqref{eq:mani_anal} implies that $W^{\rho}(0)$ is the union of three $\rho$-analytic solution trajectories: the equilibrium $0$, $W^{\rho}(0)$ restricted to $\rho>0$ and $W^{\rho}(0)$ restricted to $\rho<0$.  
Furthermore, there exist $\rho$-analytic solution, and they are unique modulo translation in time if we include the constraint $\rho=0$, $\rho>0$ or $\rho<0$. Note that orbits on $W^{\rho}(0)$ cannot have a complicated geometry in $M$ since \eqref{eq:gov_n} is linear in $\rho$.

\section{Proof Main Theorem \label{sec:proof}}

The proof of the main theorem, Theorem \ref{theo:main}, is given at the end of this section. It relies on Lemmas \ref{lem:negeig} and \ref{lem:trans}. Lemma \ref{lem:negeig} states that Theorem \ref{theo:main} holds under the additional assumption that ${\rm Re}(\lambda_i)<0$. The proof of Lemma \ref{lem:negeig} relies on the analytic version of the unstable manifold theorem. This gives analyticity of the solution without the need to prove convergence of power series. Lemma \ref{lem:trans} introduces a recursive transformation under which the eigenvalues can be shifted to the left half-plane in $\C$ such that Lemma \ref{lem:negeig} can be applied. At each iteration of this transformation we linearize around the next coefficient in the power series of the analytic solution.

\begin{lem} If  ${\rm Re}(\lambda_i)<0$ for all $1 \leq i \leq n$, then $W^{\rho}(0)$ is a one-dimensional smooth manifold satisfying \eqref{eq:mani_anal}. \label{lem:negeig} 
\end{lem}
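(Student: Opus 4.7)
The plan is to apply an analytic version of the unstable manifold theorem to \eqref{eq:gov_n} and then combine the resulting inclusion with Proposition \ref{propo}. The spectrum of the linearization \eqref{eq:lin} consists of $\lambda_1,\ldots,\lambda_n$ together with $\sigma$. By the hypothesis $\mathrm{Re}(\lambda_i)<0$ and $\sigma>0$, the unstable subspace $E^u$ is exactly one-dimensional, and a direct computation gives the eigenvector $e_u := \bigl((\sigma I - A)^{-1}b,\,1\bigr)$ (the inverse exists because $\sigma$ is not an eigenvalue of $A$). In particular the $\rho$-component of $e_u$ equals $1$, so $E^u$ is transverse to the hyperplane $\{\rho=0\}$.

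Next, since $f\in C^\omega$, I would invoke the analytic unstable manifold theorem (via one of the references cited in the introduction) to obtain an analytic one-dimensional invariant manifold $W^u(0)$ tangent to $E^u$ at $0$. By the transversality observation above, the analytic implicit function theorem yields $\varepsilon>0$ and $g\in C^\omega((-\varepsilon,\varepsilon),\mathbb{R}^n)$ with $g(0)=0$ such that
\[
W^u(0)\cap\{|\rho|<\varepsilon\} = \{(g(\rho),\rho) : \rho\in(-\varepsilon,\varepsilon)\}.
\]

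To conclude, I would show $W^u(0)\subseteq W^{\rho}(0)$: any trajectory $(x(t),\rho(t))$ in $W^u(0)$ satisfies $(x(t),\rho(t))\to 0$ as $t\to -\infty$ by definition of the unstable manifold, so for some $R>0$ and all $t<-R$ the orbit lies inside the graph chart, giving $x(t)=g(\rho(t))$. By Definition \ref{defi:rho-anal}, the trajectory is $\rho$-analytic. Together with the reverse inclusion from Proposition \ref{propo}, this yields $W^{\rho}(0)=W^u(0)$, which is a one-dimensional smooth (indeed analytic) manifold satisfying \eqref{eq:mani_anal}.

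The main obstacle is locating and correctly invoking an analytic (rather than merely $C^k$) version of the unstable manifold theorem; this is exactly the step that allows one to avoid an explicit convergence argument for a power series ansatz of $g$. Once analyticity of the graph map $g$ is secured, the remaining verifications—transversality of $E^u$ to $\{\rho=0\}$ and matching the asymptotic requirement of Definition \ref{defi:rho-anal}—are routine.
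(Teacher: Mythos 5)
Your proposal is correct and follows essentially the same route as the paper: identify the one-dimensional unstable subspace of the linearization (spanned by a vector with nonzero $\rho$-component, which you compute explicitly as $((\sigma I - A)^{-1}b,1)$), invoke the analytic unstable manifold theorem to realize $W^u(0)$ locally as an analytic graph over $\rho$, and combine $W^u(0)\subset W^\rho(0)$ with Proposition~\ref{propo} to conclude equality. The only cosmetic difference is that the paper reads the graph representation directly off the unstable manifold theorem (graph of $\hat g: E^u(\varepsilon)\to E^s(\varepsilon)$ reparametrized by $\rho$), while you re-derive it via transversality and the analytic implicit function theorem — both are sound.
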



%
%
%

\begin{proof} As preparation, we denote by $E^s$ and $E^u$ the stable and unstable subspace of the eigenspaces of \eqref{eq:lin}, respectively. Since ${\rm Re}(\lambda_i)<0$, we observe that ${\rm dim}(E^s)= n$, ${\rm dim}(E^u)= 1$ and that the eigenvalue corresponding to $E^u$ is $2$. As in the proof of Proposition \ref{propo} we obtain that $E^u = \langle  \overline{v} \rangle$ with $\overline{v} := (v,1) \in M$ for some $v \in \R^n$. 

First, we prove Lemma \ref{lem:negeig} for $W^u(0)$ instead of $W^\rho(0)$. We start with property \eqref{eq:mani_anal}. With this aim, we prepare for applying the local unstable manifold theorem \cite{abbondandolo2006lectures}. We use the corresponding notation.  Let $E^u(\varepsilon) := E^u \cap B_{\varepsilon}(0)$ and $E^{s}(\varepsilon)  := E^s \cap B_{\varepsilon}(0)$, where $B_{\varepsilon}(0)$ is the ball in $M$ centred at $0$ with radius $\varepsilon > 0$. Denote by  $W^u_{\rm loc, \varepsilon}(0)$ the local unstable manifold induced by $B_{\varepsilon}(0)$. Then, the local unstable manifold theorem states that $W^u_{\rm loc, \varepsilon}(0)$ is the graph of some $\hat{g} \in C^{\omega}(E^{u}(\varepsilon),E^{s}(\varepsilon) )$ with $\hat{g}(0)=0$ and $D\hat{g}(0)=0$. Using this and recalling $E^u = \langle  \overline{v} \rangle$, we parametrize $W^u_{\rm loc, \varepsilon}(0)$ by $\overline{g} \in C^\omega((-\varepsilon, \varepsilon), \mathbb{R}^{n+1})$ given by  $\overline{g}(\rho) = \hat{g}(\rho \overline{v}) +  \rho \overline{v}$. Restricting $\overline{g}$ to the $x$-component we obtain that $W^u_{\rm loc, \varepsilon}(0)$ satisfies \eqref{eq:mani_anal}. 

Next we extend $W^u_{\rm loc, \varepsilon}(0)$ to the global manifold
\begin{align*}
W^{u}(0) = \bigcup_{t \geq 0 } \phi_t(W^u_{\rm loc, \varepsilon}(0)),
\end{align*}
with $\phi$ denoting the flow of \eqref{eq:gov_n}. By this construction, $W^{u}(0)$ is a one-dimensional smooth manifold and $W^{u}(0) \subset W^{\rho} (0)$. Hence, $W^{u}(0)$ satisfies Lemma \ref{lem:negeig}, and, by Proposition \ref{propo}, $W^{\rho}(0) = W^{u}(0)$. This completes the proof. 
\end{proof}

In Lemma \ref{lem:trans} we recursively expand $x$ in terms of powers of $\rho$. In more detail, we consider the governing equations corresponding to $\tilde{x}$ induced from 
\begin{align}
x(t)=\rho(t) (\tilde{x}(t)+\tilde{c}) \label{eq:xtilde} 
\end{align}
for a certain $\tilde c \in \R^n$. 

\begin{lem} Let $\lambda_i \neq \sigma$ for all $1 \leq i \leq n$ and take $\tilde{c} = -(A-\sigma I)^{-1}b$. Then, $\tilde x$ defined by \eqref{eq:xtilde} satisfies the ODE
\begin{align}
\dot{\tilde{x}} = (A - \sigma I) \tilde{x} + \tilde{b} \rho + \tilde{f}(\tilde{x},\rho) \label{eq:gov2}
\end{align}
for some $\tilde{b} \in \mathbb{R}^n$ and  $\tilde{f} \in C^{\omega}(\mathbb{R}^{n+1}, \mathbb{R}^n)$ with $\tilde{f}(y) = O(|y|^2)$ as $y \to 0$. Furthermore, if \eqref{eq:gov2} has a one-dimensional smooth manifold $\tilde{W}^{\rho}(0)$ satisfying \eqref{eq:mani_anal} then \eqref{eq:gov_n} has a one-dimensional smooth manifold $W^{\rho}(0)$ satisfying \eqref{eq:mani_anal}. \label{lem:trans}
\end{lem}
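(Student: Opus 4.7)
The plan is to split the proof into two parts: first, a direct computation showing that the change of variables \eqref{eq:xtilde} converts \eqref{eq:gov_n} into \eqref{eq:gov2} with the claimed structure; second, a transfer of the graph property \eqref{eq:mani_anal} via a bijection between local $\rho$-analytic solutions of the two systems.

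For the first part, I differentiate \eqref{eq:xtilde} to obtain $\dot x = \sigma \rho(\tilde x + \tilde c) + \rho \dot{\tilde x}$, substitute the ODE for $\dot x$ from \eqref{eq:gov_n} with $x = \rho(\tilde x + \tilde c)$, and divide through by $\rho$ to get
\[
\dot{\tilde x} = (A - \sigma I)\tilde x + (A - \sigma I)\tilde c + b + \frac{1}{\rho}\, f\bigl(\rho(\tilde x + \tilde c),\, \rho\bigr).
\]
The choice $\tilde c = -(A - \sigma I)^{-1} b$ kills the middle pair of terms. The nontrivial claim is the analyticity and $O(|y|^2)$ structure of the remainder. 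Expanding $f$ as an absolutely convergent power series $f(x,\rho) = \sum_{|\alpha|+k \geq 2} c_{\alpha,k}\, x^\alpha \rho^k$ near the origin and substituting $x = \rho(\tilde x + \tilde c)$ yields
\[
\frac{1}{\rho}\, f(\rho(\tilde x + \tilde c), \rho) = \sum_{|\alpha|+k \geq 2} c_{\alpha,k}\, (\tilde x + \tilde c)^\alpha\, \rho^{|\alpha|+k-1},
\]
whose $\rho$-exponents are all $\geq 1$, so this sum defines an analytic function $h(\tilde x,\rho)$ near $0$ with $h(\tilde x, 0) \equiv 0$. Setting $\tilde b := \partial_\rho h(0,0)$ and $\tilde f(\tilde x,\rho) := h(\tilde x,\rho) - \tilde b \rho$, inspection of the exponents shows $\tilde f(y) = O(|y|^2)$.

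For the second part, suppose $\tilde W^\rho(0)$ is a one-dimensional smooth manifold satisfying \eqref{eq:mani_anal} with analytic parametrization $\tilde g$; since $0 \in \tilde W^\rho(0)$, in particular $\tilde g(0) = 0$. Define $g(\rho) := \rho (\tilde g(\rho) + \tilde c)$, which is analytic with $g(0) = 0$. A direct check using the computation above shows that $\{(g(\rho), \rho) : |\rho| < \tilde \varepsilon\} \subset W^\rho(0)$. For the reverse inclusion, I would take any orbit in $W^\rho(0)$ parametrized near $\rho = 0$ as $(\hat g(\rho), \rho)$ with $\hat g$ analytic and $\hat g(0) = 0$, and show that $\hat g'(0) = \tilde c$. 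This follows by substituting into \eqref{eq:gov_n}: the identity $\sigma \rho\, \hat g'(\rho) = A \hat g(\rho) + b \rho + f(\hat g(\rho), \rho)$, combined with $f(y) = O(|y|^2)$, forces $\sigma \hat g'(0) = A \hat g'(0) + b$ at order $\rho$, hence $\hat g'(0) = -(A - \sigma I)^{-1} b = \tilde c$. Consequently $\tilde h(\rho) := \hat g(\rho)/\rho - \tilde c$ is analytic with $\tilde h(0) = 0$, so the corresponding orbit lies on $\tilde W^\rho(0)$; the graph uniqueness in \eqref{eq:mani_anal} for the transformed system then gives $\tilde h = \tilde g$, i.e.\ $\hat g = g$.

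The main obstacle is the reverse inclusion in the second part: it hinges on the observation that the ODE itself forces $\hat g'(0) = \tilde c$ for every $\rho$-analytic parametrization, so the formal transformation $\tilde x = x/\rho - \tilde c$ actually produces an analytic $\tilde x$-trajectory despite the division by $\rho \to 0$. Once this is in place, the bijection between local $\rho$-analytic graphs is immediate and \eqref{eq:mani_anal} transfers cleanly from $\tilde W^\rho(0)$ to $W^\rho(0)$.
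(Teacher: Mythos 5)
Your proposal is correct and follows essentially the same route as the paper: a direct change-of-variables computation to derive \eqref{eq:gov2} (the paper uses a bilinear decomposition $f = B + \hat f$ where you use the full power series, but these are equivalent), followed by matching the coefficient at order $\rho$ to show that any $\rho$-analytic solution of \eqref{eq:gov_n} forces $\hat g'(0) = \tilde c$, which transfers the graph property back and forth (the paper phrases the same step as: the residual constant term $(A-\sigma I)(\hat c - \tilde c)$ in the transformed ODE must vanish because $\dot{\overline x}(t) \to 0$ as $t\to-\infty$, which is the same coefficient identity).
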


\begin{proof} We start with deriving \eqref{eq:gov2}. Using \eqref{eq:gov_n} we compute
\begin{gather}
\begin{aligned}
  \dot{\tilde x}
  &= \dot{x}/ \rho - \sigma x / \rho,  \\
  &= (A - \sigma I) x/ \rho + b + f(x,\rho)/\rho, \\
  &= (A - \sigma I) (\tilde x + \tilde c) + b + f(\rho(\tilde{x}+\tilde{c}), \rho)/\rho, \\
  &= (A - \sigma I) \tilde x + f(\rho (\tilde{x}+\tilde{c}), \rho)/\rho.
\end{aligned} \label{eq:dxtilde}
\end{gather}
Since $f(y) \in O(|y|^2)$ is analytic, we can write it as $f(y) = B(y,y) + \hat f(y)$ for some $\hat f(y) \in O(|y|^3)$ and some bilinear map $B : (\R^{n+1})^2 \to \R^n$. Then, \[ 
  f(\rho (\tilde{x}+\tilde{c})  ,  \rho) / \rho
  = \rho B( (\tilde{x}+\tilde{c}, 1), (\tilde{x}+\tilde{c}, 1) ) +  \hat{f}(\rho (\tilde{x}+\tilde{c})  ,  \rho)/\rho.
\]
Thus, taking $\tilde b := B( (\tilde{c}, 1), (\tilde{c}, 1) )$ and 
\[
  \tilde f(\tilde x,\rho) := \hat{f}(\rho (\tilde x+\tilde{c})  , \rho)/\rho + \rho B \big( ( \tilde{x},0) + 2(\tilde c, 1), (\tilde{x}, 0) \big),
\]  
Equation \eqref{eq:gov2} follows and $\tilde f(\tilde y) \in O(|\tilde y|^2)$ because $\tilde f (\tilde x,\rho) = O(\rho^2 + |\rho \tilde x|)$ as $z \to 0$. 

It remains to show that if \eqref{eq:gov2} has a one-dimensional smooth manifold $\tilde{W}^{\rho}(0)$  satisfying \eqref{eq:mani_anal}, then \eqref{eq:gov_n} has a one-dimensional smooth manifold $W^{\rho}(0)$ satisfying \eqref{eq:mani_anal}. 
Using that \eqref{eq:gov2} has a one-dimensional smooth manifold  $\tilde{W}^{\rho}(0)$ satisfying \eqref{eq:mani_anal}, we infer that the trajectories of the $\rho$-analytic solutions $(\tilde{x},\rho)$ for $\rho >0, \rho=0$ or $ \rho<0$ analytically connect at $0$. Given any of these three solutions, it is easy to verify that $({x},\rho)$ given by \eqref{eq:xtilde} is a $\rho$-analytic solution of \eqref{eq:gov_n}. We claim that it is the unique $\rho$-analytic solution of \eqref{eq:gov_n} modulo translation in time when restricted to $\rho >0, \rho=0$ or $ \rho<0$. Relying on this claim, it follows from \eqref{eq:xtilde} that \eqref{eq:gov_n} has  a one-dimensional smooth manifold $W^{\rho}(0)$ satisfying \eqref{eq:mani_anal}.

We are left with proving the claim. The case $\rho = 0$ is trivial and the cases $\rho > 0$ and $\rho < 0$ can be treated similarly. We focus on the case $\rho > 0$. Let $(\hat x, \hat \rho)$ be any $\hat \rho$-analytic solution of \eqref{eq:gov_n}. By translating time we may assume that $\hat \rho = \rho$. Since $(\hat x, \rho)$ is $\rho$-analytic, there exist an $R > 0$ and a local, analytic function $\hat g$ with $\hat g(0) = 0$ such that $\hat x = \hat g (\rho)$ on $(-\infty, -R)$. Then, there exists a unique coefficient $\hat c \in \R^n$ such that $(\overline x, \rho)$ with $\overline x := \hat x / \rho - \hat c$ is $\rho$-analytic. Similar to \eqref{eq:dxtilde}, we obtain that $\overline x$ satisfies
\begin{equation} \label{pfz}
  \dot{\overline x}
  = (A - \sigma I) \overline{x} + (A - \sigma I) (\hat c - \tilde c) +  \overline{b} \rho + \overline{f}(\overline{x},\rho) 
\end{equation}
for some $\overline{b} \in \mathbb{R}^n$ and  $\overline{f} \in C^{\omega}(\mathbb{R}^{n+1}, \mathbb{R}^n)$ with $\overline{f}(y) = O(|y|^2)$ as $y \to 0$. Since $\rho$-analyticity implies that $\dot{\overline x}(t) \to 0$ as $t \to -\infty$, it follows that the right-hand side in \eqref{pfz} vanishes as $(\overline{x}, \rho) \to 0$. Hence, $\hat c = \tilde c$, and thus $(\overline{x}, \rho)$ satisfies the same ODE as $(\tilde {x}, \rho)$. By the uniqueness of $\tilde x$ it follows that $\overline x$ is unique, and therefore that $\hat x$ is unique.
\end{proof}

\begin{proof}[Proof of Theorem \ref{theo:main}] Let $\Lambda := \max_{1 \leq i \leq n} {\rm Re}(\lambda_i)$ and set $K = \lfloor \Lambda/\sigma \rfloor + 1$. Applying Lemma \ref{lem:trans} $K$-times recursively (relying on $\rm{Re} (\lambda_i) \notin \sigma \N_+$) we obtain that the eigenvalues of the resulting matrix in \eqref{eq:gov2} are in the negative half-plane. Consequently, we can apply Lemma \ref{lem:negeig}. Then, it follows by preservation of $W^{\rho}(0)$ from Lemma \ref{lem:trans} that Equation \eqref{eq:gov_n} has a one-dimensional smooth manifold $W^{\rho}(0)$ satisfying \eqref{eq:mani_anal}.  
\end{proof}

\section{Application \label{sec:application}}

We first reformulate Theorem \ref{theo:main} in the setting of the singular ODE \eqref{eq:intro_an_n}--\eqref{eq:intro_n} and then apply it to the Ballistic Ageing Thin viscous Sheet (BATS) model for fungal tip growth \cite{jong2020modelling}.

%

\subsection{Uniqueness of solutions of Singular ODEs}

Recall \eqref{eq:intro_an_n}, \eqref{eq:intro_n}, \eqref{eq:V}. We obtain the following corollary from Theorem \ref{theo:main}.

\begin{cor} Let $\lambda_1, \cdots, \lambda_n$ be the eigenvalues of $A$. If ${\rm Re}(\lambda_i) \notin 2 \N_+$ for all $1 \leq i \leq n$, then there exists a unique solution $x$ of \eqref{eq:intro_n} satisfying \eqref{eq:intro_an_n}.
\label{cor:main}
\end{cor}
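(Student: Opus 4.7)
The plan is to reduce Corollary \ref{cor:main} to Theorem \ref{theo:main} via the change of variables $r = e^t$, $\rho = e^{2t}$ outlined in the introduction (equations \eqref{eq:nonaut}--\eqref{eq:intro_DS_an}). Under this substitution the singular initial value problem \eqref{eq:intro_n} becomes an instance of the autonomous system \eqref{eq:gov_n} with $\sigma = 2$, where $A$, $b$ and $f$ are read off from the expansion \eqref{eq:V}. The analytic condition \eqref{eq:intro_an_n} translates precisely into $\rho$-analyticity in the sense of Definition \ref{defi:rho-anal}, and the eigenvalue hypothesis ${\rm Re}(\lambda_i) \notin 2\N_+$ is exactly ${\rm Re}(\lambda_i) \notin \sigma \N_+$. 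Theorem \ref{theo:main} then furnishes an analytic $g : (-\varepsilon, \varepsilon) \to \R^n$ with $g(0)=0$ (the latter because the equilibrium $(0,0)$ lies in $W^\rho(0)$ and hence on the graph) parametrizing $W^\rho(0)$ near the origin.

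For existence, I would set $x(r) := g(r^2)$ on $0 \leq r < \sqrt{\varepsilon}$. Analyticity in $r^2$ and the initial condition $x(0) = 0$ are immediate. To verify the ODE \eqref{eq:intro_n}, I would use that the graph $\{(g(\rho), \rho) : \rho \in (-\varepsilon, \varepsilon)\}$ is invariant under the flow of \eqref{eq:gov_n}; parametrizing by $\rho(t) = e^{2t}$ produces the identity $2\rho g'(\rho) = V(g(\rho), \rho)$, which upon substituting $\rho = r^2$ is precisely $r \cdot dx/dr = V(x, r^2)$.

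For uniqueness, suppose $x$ is any solution of \eqref{eq:intro_n} satisfying \eqref{eq:intro_an_n}. Setting $\hat x(t) := x(e^t)$ produces a trajectory $(\hat x(t), e^{2t})$ of \eqref{eq:gov_n}, and the analytic representative of $x$ furnished by \eqref{eq:intro_an_n} makes this trajectory $\rho$-analytic in the sense of Definition \ref{defi:rho-anal}. Hence its image lies in $W^\rho(0)$, and by the graph structure \eqref{eq:mani_anal} we obtain $\hat x(t) = g(e^{2t})$ for $t$ sufficiently negative, i.e.\ $x(r) = g(r^2)$ for small $r > 0$. Standard forward uniqueness for the regular ODE on $r > 0$ then extends this identity throughout the common domain of definition.

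The main subtlety — more bookkeeping than genuine obstacle — is to reconcile the ``unique modulo translation in time'' phrasing from the discussion following Theorem \ref{theo:main} with the outright uniqueness required in Corollary \ref{cor:main}. The resolution is that the substitution $r = e^t$ forces $\rho = r^2$, which pins down the free constant $c_0$ in $\rho(t) = c_0 e^{\sigma t}$ to $c_0 = 1$ and thereby collapses the one-parameter family of time-translated trajectories to a single solution in the original $r$-variable.
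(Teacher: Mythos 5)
Your proof is correct and follows the same route as the paper: reduce to Theorem \ref{theo:main} with $\sigma = 2$ via the substitution $r = e^t$, $\rho = e^{2t}$, and invert the transformation. The paper's own proof states this very tersely (``Since the transformation\ldots is invertible, Corollary \ref{cor:main} follows''); you have merely filled in the existence/uniqueness bookkeeping, including the correct observation that fixing $\rho = r^2$ pins down $c_0 = 1$ and collapses the time-translation freedom.
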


\begin{proof}  
Take $\sigma =2$ in \eqref{eq:gov_n}. Theorem \ref{theo:main} implies that there exists a unique solution $(x, \rho)$ to \eqref{eq:gov_n} up to translation in time with $\rho > 0$ and $x(t) = g(\rho(t))$ for some $g \in C^{\omega}((-\varepsilon,\varepsilon), \R^n)$ and all $t$ small enough. Since the transformation introduced in Section \ref{sec:intro} to transform \eqref{eq:intro_an_n}--\eqref{eq:intro_n} into \eqref{eq:gov_n} is invertible, Corollary \ref{cor:main} follows. 
\end{proof}

\subsection{The BATS model \label{sec:bats}}

The BATS model \cite{jong2020modelling} describes the shape of a single axially symmetric fungal cell wall during growth. It assumes a constant speed of growth and an equilibrium shape of the cell tip in the co-ordinate frame which moves along the cell tip. The independent variable describing the cell wall is the arclength $s$. Specifically, we have that $s \rightarrow 0$ describes the tip of the cell, see Figure \ref{fig:cellshape}. 

In \cite{jong2019numerics} the shape of the cell tip is computed numerically with asymptotic expansions. The authors observed that for certain special choices of the parameters in the BATS model the coefficients in these expansions blow up and the expansions fail to capture the shape of the cell tip. Yet, no theoretical explanation was found for this observation. 

Our aim is to seek such theoretical explanation. We will cast the system of ODEs of the BATS model in a form to which Corollary \ref{cor:main} can be applied. The non-resonance condition on the eigenvalues translates to a condition on the parameters of the BATS model. If the BATS model has a unique, local, analytic solution, then we expected that the numerically computed solutions constructed from asymptotic expansions converge to the corresponding coefficients in the power series of the exact solution. Furthermore, it will turn out that the non-resonance condition in Corollary \ref{cor:main} precisely characterizes all cases in \cite{jong2019numerics} where the coefficients blow up. This demonstrates in a specific setting the necessity of the non-resonance condition in Corollary \ref{cor:main} and Theorem \ref{theo:main}. 
\smallskip

\begin{figure}[h]
\centering
\includegraphics[width=5cm]{./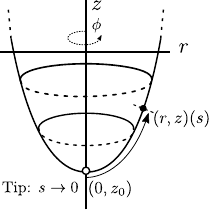}
\caption{Fungal tip growth shape: The surface is parametrised with respect to the arclength $s$ to the cell's tip and azimuth variable $\phi$. In terms of the $(r,z)$-variables the tip is located at $(0,z_0)$. \label{fig:cellshape} }
\end{figure}   

First, we introduce the BATS model. We consider the phase space given by 
\begin{equation}
M= \{ (\varsigma , h , \Psi, z,r) \in  (-1,1) \times \mathbb{R} \times \mathbb{R} \times \mathbb{R}  \times \mathbb{R}_+     \}.
\end{equation}
The $h$ variable represents the cell wall thickness, $\Psi$ is the age of the cell wall material, $z$ is the axial co-ordinate variable, $r$ is the radial distance variable and $\varsigma = dr/ds$. We note that $\varsigma \in (-1,1)$ since parametrization of $z,r$ by $s$ gives the equality $(dr/ds)^2 + (dz/ds)^2=1$. The governing equations are given by \cite{jong2020modelling}:
\begin{gather}\left\{
\begin{aligned}
\frac{d\varsigma}{ds} &= \frac{3}{2} \frac{1-\varsigma^2}{r} \left(-1+  \frac{\Gamma(z,r) \mu(\Psi) \varsigma \sqrt{1-\varsigma^2}}{r^3 }  \right)  \\
 \frac{dh}{ds}& = \left( \frac{r \gamma(\varsigma,z,r)}{\Gamma(z,r)}- \frac{\varsigma}{2r}- \frac{r^2 }{2 \Gamma(z,r) \mu(\Psi) \sqrt{1-\varsigma^2}}\right) h  \\
\frac{d \Psi}{ds} &= \frac{rh}{\Gamma(z,r)} - \frac{r \gamma(\varsigma,z,r) }{\Gamma(z,r) } \Psi\\
 \frac{d z}{ds} &= \sqrt{1- \varsigma^2} \\ 
 \frac{dr}{ds} &= \varsigma ,
\end{aligned} \right. \label{eq:finalfull}
 \end{gather}
where 
\begin{align*}
\gamma(\varsigma,z,r)=
  \frac{r  \sqrt{1-\varsigma^2}-z  \varsigma}{(z^2+r^2)^{3/2}},
 \qquad \Gamma(z,r) = 1 + \frac{z}{\sqrt{r^2+z^2}} ,
\end{align*}
and $\mu  \in C^\omega(\mathbb{R}_+, \mathbb{R}_+)$  satisfies
\begin{align}
\mu \text{ is increasing}
\quad \text{and} \quad
\lim_{\Psi \rightarrow \infty}\mu(\Psi)= \infty.
\end{align}
The function $\mu$ corresponds to viscosity. The viscosity of the cell wall increases with age which corresponds to hardening of the cell wall. 
\\

For the tip shape to be smooth we require two conditions on $(\varsigma, h,\Psi,z,r)$ as $s \to 0$:

\begin{itemize}
\item[T1] \textbf{Tip limits:} there exist $h_0 >0$ and $z_0<0$ such that  
\begin{gather}
\lim_{s \rightarrow 0} (\varsigma,h,\Psi,z,r)(s) 
= (1, h_0, h_0 z_0^2, z_0, 0); \label{eq:limits}
\end{gather}
\item[T2] \textbf{Analyticity:} there exist $s_1>0$ and $g \in C^{\omega}\left( (-\varepsilon ,\varepsilon), \mathbb{R}^4 \right)$ with $\varepsilon=r(s_1)^2$ such that 
\begin{align}
(\varsigma,h,\Psi,z)(s)= g(r(s)^2) \qquad \forall s \in (0,s_1). \label{eq:anal}
\end{align}
\end{itemize}
Condition T1 follows from local analysis of solutions with a tip \cite{jong2020modelling}. Specifically, $z_0$ corresponds to the distance of the tip to the cell wall producing organelle. Condition T2 is a result of requiring a smooth shape at the tip as in Figure \ref{fig:cellshape}. 
It allows for expressing the solutions as $(\varsigma, h,\Psi,z)$ as an even analytic function of $r$ on a neighborhood around $0$.
\smallskip

Next we write the BATS model and its desired solution in the form \eqref{eq:intro_an_n}--\eqref{eq:intro_n}. Since $z$ and $r$ are dependent variables, we can reduce the number of equations from five to four. We do so by considering $r$ as the variable which replaces $s$. Simultaneously, we change the unknown $\varsigma$ to 
\[
  \eta = \frac{\sqrt{1-{ \varsigma}^2}}{r} \qquad \big(\text{note that } \varsigma = \sqrt{1-\eta^2 r^2} \big) 
\]
to avoid a removable singularity. 
Denoting by $'$ the derivative with respect to $r$, we obtain
\begin{gather}
\left\{\begin{aligned}
\eta '  &=  \frac{\eta}{2 r} \left(1 -3  \frac{\mu({\Psi}) \eta \sqrt{1-\eta^2 r^2}}{\Xi({z},r^2)} \right) ,\\
h'  &= \left(  \frac{ \xi(\eta, {z}, r^2) {\Xi}({z},r^2)}{ \sqrt{1-\eta^2 r^2}} -  \frac{ {\Xi}({z},r^2)   }{ 2 {\mu}({\Psi})   \eta \sqrt{1-\eta^2 r^2}}  - \frac{1}{2   }  \right) \frac{h}{r} , \\
\Psi' &=  {\Xi}({z},r^2) \frac{ {{h   -  \xi(\eta, {z},r^2) \Psi   }}}{ r \sqrt{1-\eta^2 r^2}}, \\
z ' &= \frac{r \eta }{ \sqrt{1-\eta^2 r^2}},\\
\end{aligned}\right.  \label{eq:odetfi}
\end{gather}
with 
\begin{align*}
\xi(\eta, {z}, r^2 ) 
&:= \gamma \Big(\frac{\sqrt{1-{ \varsigma}^2}}{r}, {z}, r \Big) 
= \frac{ r^2 \eta - {z} \sqrt{1-\eta^2 r^2}}{(r^2+{z}^2)^{3/2}}, \\ \Xi({z},r^2) 
&:= \frac{r^2}{\Gamma(z,r)}
= r^2+{z}^2 - {z} \sqrt{r^2+{z}^2}.
\end{align*}

Next we compute the initial condition. From \eqref{eq:limits} this is trivial for $h, \Psi, z$. The initial condition for $\eta$ requires some computation. Indeed, while
\[
  \lim_{r \to 0} \eta (r) = \lim_{s \to 0} \frac{z'(s)}{r(s)},
\]
the enumerator and denominator vanish as $s \to 0$. Using l'Hopital and noting from $z' = \sqrt{1 - \varsigma^2} = \sqrt{1 - (r')^2}$ that $z'' = -r'' r' / z' = - \varsigma' r' / z'$ we obtain
\begin{align*}
  \lim_{r \to 0} \eta (r) &= \lim_{s \to 0} \frac{z''(s)}{r'(s)} 
  = \lim_{s \to 0} -\frac{\varsigma'(s)}{z'(s)} \notag \\ \nonumber
  &= \lim_{s \to 0}  \frac{3}{2} \frac{\sqrt{1-\varsigma(s)^2}}{r(s)} \left( 1 - \frac{\Gamma(z(s),r(s)) }{r(s)^2 } \frac{\sqrt{1-\varsigma(s)^2}}{r(s) } \mu(\Psi(s)) \varsigma(s) \right) \\ \nonumber
  &= \frac{3}{2} \lim_{r \to 0} \eta(r) \left(  1 - \frac{ \eta(r) }{ \Xi(z(r),r^2) } \mu(\Psi(r)) \sqrt{1- \eta(r)^2 r^4} \right) \\ \nonumber
   &= \frac{3}{2} \lim_{r \to 0} \eta(r) \left( 1 - \frac{\eta(r)}{2 z_0^2  } \mu(h_0 z_0^2) \right). 
\end{align*}
Solving for $\eta(0)$ yields $\eta(0) = 0$ or $\eta(0) = 2 z_0^2 / (3 \mu(h_0 z_0^2))$. If $\eta(0) = 0$, then \eqref{eq:odetfi} and T1 imply $\lim_{r \to 0}r h'(r) = - \infty$, which contradicts with T2. Therefore, we only consider $\eta(0) = 2 z_0^2 / (3 \mu(h_0 z_0^2))$. In conclusion, we obtain
\begin{gather}
\lim_{r \rightarrow 0} (\eta,h,\Psi,z)(r) 
= \left( \frac{2 z_0^2}{3 \mu(h_0 z_0^2)}      , h_0,   h_0z_0^2, z_0 \right)
=: p_0(h_0,z_0). \label{eq:limits:r}
\end{gather}

Finally, T2 implies directly local analyticity of $h, \Psi, z$ and ensures that the odd coefficients of the expansions for $h, \Psi, z$ are zero. Then, writing $\eta(r) = \frac{ z'(r) \varsigma(r)}{r}$ 
and using that all even coefficients of $z'(r)$ are zero, we conclude that $\eta$ satisfies the same property. In conclusion, there exist $\varepsilon>0$ and $\tilde g \in C^{\omega}\left( (-\varepsilon ,\varepsilon), \mathbb{R}^4 \right)$ such that 
\begin{align}
(\eta,h,\Psi,z)(r)= \tilde g(r^2) \qquad \forall r \in (0, \varepsilon). \label{eq:anal:r}
\end{align}
 
Next we cast \eqref{eq:odetfi} in the form \eqref{eq:intro_n}. Suppose there exists a local solution $\overline x := ( \eta,  h ,  \Psi,  z)$ to \eqref{eq:odetfi}. Let $r_0$ be small enough such that $\overline x$ exists on $[0, r_0]$, $\sup_{(0, r_0)} z < 0$ and $\inf_{(0, r_0)} \min\{ \Psi, \eta \} > 0$. Then, the right-hand side in \eqref{eq:odetfi} can be written as $\overline V (\overline x, r^2)/r$, where $\overline V : \R^5 \to \R^4$ is analytic in a neighborhood around $p_0(h_0,z_0)$. To obtain the initial condition in \eqref{eq:intro_n}, we shift variables to ${x}  := \overline{x} -  p_0(h_0,z_0)$ and set $V(x, r^2) := \overline V (x + p_0, r^2)$. We observe from \eqref{eq:anal:r} that $x$ satisfies \eqref{eq:intro_an_n}.

Finally, we note that this transformation can easily be inverted, i.e.\ if \eqref{eq:odetfi}--\eqref{eq:limits:r} has a solution satisfying \eqref{eq:anal:r}, then \eqref{eq:finalfull}--\eqref{eq:limits} has a solution satisfying T2. Indeed, $\varsigma(r) = \sqrt{1- \eta(r)^2 r^2}$ satisfies the condition in T2. Introducing $s(r)$ as the solution of $\frac{ds}{dr}(r) = 1/\varsigma(r), s(0) = 0 $, we apply the inverse function theorem (relying on $\frac{ds}{dr}(0) =1 \neq 0$) to parametrize $\varsigma, h, \Psi, z, h$ in $s$ around $s=0$. Then, \eqref{eq:finalfull}--\eqref{eq:limits} follows.
\smallskip

To summarize the above, \eqref{eq:finalfull}--\eqref{eq:limits} has a solution satisfying T2 if and only if \eqref{eq:intro_n} has a solution satisfying \eqref{eq:intro_an_n}, where $V$ is as constructed above. Hence, we may work with \eqref{eq:intro_an_n}--\eqref{eq:intro_n} in the remainder. Corollary \ref{cor:main} provides a sufficient condition for the existence and uniqueness of solutions to \eqref{eq:intro_n} which satisfy \eqref{eq:intro_an_n}. To make this condition explicit, we need to compute the eigenvalues of $A := \nabla_x V(0)$ (see \eqref{eq:V}). From \eqref{eq:odetfi} we compute
\begin{align}
A = \nabla_x V(0) =
\begin{bmatrix}
 -\frac{1}{2} & 0 & -\frac{z_0^2 \mu '\left(h_0 z_0^2\right)}{3 \mu \left(h_0 z_0^2\right){}^2} & \frac{2 z_0}{3 \mu \left(h_0 z_0^2\right)} \\
 \frac{9 h_0 \mu \left(h_0 z_0^2\right)}{4 z_0^2} & 0 & \frac{3 h_0 \mu '\left(h_0 z_0^2\right)}{2 \mu \left(h_0 z_0^2\right)} & -\frac{3 h_0}{z_0}  \\
 0 & 2 z_0^2 & -2 & 4 h_0 z_0 \\
 0 & 0 & 0 & 0  
\end{bmatrix} . \label{eq:A}
\end{align}
The eigenvalues corresponding to $A$ are given by 
\begin{gather}
\begin{aligned}
\lambda_1(h_0,z_0) &=0 , \; \lambda_2(h_0,z_0) = 0 ,\\ 
  \lambda_3(h_0,z_0) &= \frac{1}{4} \left(-5 -\sqrt{\frac{48 h_0 z_0^2 \mu '\left(h_0 z_0^2\right)}{\mu \left(h_0 z_0^2\right)}+9}\right),\\
     \lambda_4(h_0,z_0) & =  \frac{1}{4} \left( -5 + \sqrt{\frac{48 h_0 z_0^2 \mu '\left(h_0 z_0^2\right)}{\mu \left(h_0 z_0^2\right)}+9}\right).
\end{aligned} \label{eq:Alambda}
\end{gather} 
Since $\lambda_3(h_0,z_0) < 0$ for all $h_0 > 0 > z_0$, the condition in Corollary \ref{cor:main} translates to 
\begin{align} \label{bats:nonres:cond}  
  \frac12 \lambda_4(h_0,z_0) = \frac{1}{8} \left( -5 + \sqrt{\frac{48 h_0 z_0^2 \mu '\left(h_0 z_0^2\right)}{\mu \left(h_0 z_0^2\right)}+9}\right)
  \notin \N.
\end{align}

In conclusion, \eqref{bats:nonres:cond} gives a sufficient conditions on the parameters $h_0 > 0 > z_0$ of the BATS model (see \eqref{eq:limits}) under which the BATS model describes a unique shape for the cell tip in the class of local analytic functions. Since this condition is a new result, we compare it with the findings in \cite{jong2019numerics} mentioned at the start of Section \ref{sec:bats}. In Appendix \ref{app:bats} we show that there is a one-to-one connection between \eqref{bats:nonres:cond} and the values of $h_0, z_0$ for which the asymptotic expansions for the solution in \cite{jong2019numerics} fail. This demonstrates that non-resonance conditions are indeed required in practice, and that the condition in Corollary \ref{cor:main} is in fact minimal at least in the particular case of the BATS model investigated in \cite{jong2019numerics}.

%

\section{Concluding remarks and future work \label{sec:conc}}

Corollary \ref{cor:main} provides a new tool for obtaining existence and uniqueness for solutions in the sense of \eqref{eq:intro_an_n} to singular ODEs of type \eqref{eq:intro_n}. Such ODEs appear for instance in models for the shape of axially symmetric surfaces. In Section \ref{sec:bats} we have demonstrated that Corollary \ref{cor:main} provides new properties for the BATS model, and that the sufficient conditions in Corollary \ref{cor:main} can be minimal in practice. The tangent space at the equilibrium uniquely determines the one dimensional unstable manifold in Lemma \ref{lem:negeig}. Then, it follows from Lemma \ref{lem:trans} that an expansion of sufficiently high order approximates the desired analytic solution.
\smallskip

Our results open up four interesting problems. First, we expect that Theorem \ref{theo:main} also applies if in the equation for $\rho$ in \eqref{eq:gov_n} a nonlinear term is included. Indeed, this does not alter the linearized equation and thus the proof of Lemma \ref{lem:negeig} will remain identical. While Lemma \ref{lem:trans} requires modifications since additional nonlinear terms appear when transforming the ODE, these terms can be absorbed in the nonlinearities corresponding to the $x$-component. We left out this generalization because the applications which we have in mind are captured by the linear setting.

Second, from a proof perspective we expect that a more direct approach would work which only relies on a contraction-type argument. Specifically, we could rewrite \eqref{eq:gov_n} as the non-autonomous ODE as in \eqref{eq:nonaut}. For \eqref{eq:nonaut} we can write a Duhamel-formula and proceed with an application of Banach's fixed point theorem. Such a proof would be somewhat technical. In our approach the technicalities of a fixed point argument are hidden in the application of the unstable manifold theorem. We note that an approach by applying Poincar\'{e}-Dulac to \eqref{eq:gov_n} does not work if we only assume that $\lambda_i \notin \sigma \N_+$, because there might exists a $\lambda_i = [\lambda, \sigma ] \cdot k $ with $k \in \N^{n+1}_+$, $|k| \geq 2$ and $k_{n+1} \geq 1$ \cite{anosov1997ordinary, broer2009normal}. Furthermore, Poincar\'{e}-Dulac would only yield a formal transformation which is not necessarily analytic. 

Third, one can try to generalize Theorem \ref{theo:main} to system \eqref{eq:gov_n} in which the nonlinear term $f$ is merely Lipschitz. Then, our argument by applying Lemma \ref{lem:trans} recursively does not work. Instead, it seems that one is forced to apply a Banach's fixed point type argument.

Fourth, in the setting of the BATS application in Section \ref{sec:bats} it is desirable to know whether solutions depend continuously on the parameters $h_0,z_0$. This translates to the question on whether solutions to \eqref{eq:intro_n} of type \eqref{eq:intro_an_n} are continuous with respect to perturbations of $V$. To answer this question, the procedure of Section \ref{sec:proof} can be repeated. However, in addition it needs to be shown that $W^{\rho}(0)$ and the coefficients obtained in Lemma \ref{lem:trans} are continuous with respect to the perturbation of $V$. We expect that center manifold theory \cite{carr2012applications} may provide tools to prove this.

\appendix 

\section{The BATS model for specific $\mu$ \label{app:bats}}

In \cite{jong2019numerics} the BATS model from Section \ref{sec:bats} is considered for the following choices of the viscosity function: 
\begin{align}
\mu_m(\Psi) = 1+ \Psi^m \quad \text{for } m=2,3,4,5. \label{eq:visc_m}
\end{align}
They construct expansions for the solutions to the BATS model. They observed that for $m=2,3$ the coefficients in their expansions were well-defined for any choice of the parameters $h_0 > 0 > z_0$, but that for $m =4,5$ the coefficients were singular if and only if 
\begin{align} \label{app:h0z0}
\begin{aligned}
  (h_0 z_0^2)^4 &= 5 &&\text{for } m = 4 \\
  (h_0 z_0^2)^5 &= 2 &&\text{for } m = 5.
\end{aligned}
\end{align}

Here we investigate to which extend these observations match with the condition \eqref{bats:nonres:cond}. From \eqref{eq:Alambda} we observe that  
\begin{align*}
\lambda_4(h_0,z_0;\mu_m) 
= \frac{1}{4} \left( -5 + \sqrt{\frac{48 m}{1 + (h_0 z_0^2)^{-m}}+9}\right)
\leq   \frac{1}{4} \left( -5 + \sqrt{ 48 m +9}\right),
\end{align*} 
where we have added the dependence of $\mu_m$ in the arguments of $\lambda_4$.
In particular, 
\begin{align*}
\lambda_4(h_0,z_0;\mu_m) < \begin{cases}
  2
  &\text{if } m = 2,3  \\
  3
  &\text{if } m = 4,5.
\end{cases}
\end{align*}
Consequently, for $\mu_2,\mu_3$ the condition in \eqref{bats:nonres:cond} imposes no restrictions on $(h_0,z_0)$, and for $\mu_4,\mu_5$ this condition
translates to $\lambda_4(h_0,z_0; \mu_m) \neq 2$. Since $\lambda_4$ is increasing in $h_0 z_0^2$ (see the display above), this corresponds to a single value for $h_0 z_0^2$, and it is readily verified that this value is given by \eqref{app:h0z0}. Hence, for each case examined in  \cite{jong2019numerics}, condition \eqref{bats:nonres:cond} characterizes precisely those values of $h_0, z_0$ for which the expansions in \cite{jong2019numerics} are not singular.

\bibliographystyle{alpha}
\bibliography{my_bib}{}

\end{document}